\numberwithin{equation}{section}
\newcommand{\revise}[1]{{\color{black}#1}}
\renewcommand{\frac}[2]{\tfrac{#1}{#2}}
\title{
 Computational Hardness of Static Distributionally Robust Markov Decision Processes
    }
\author{
    Yan Li   \thanks{Department of Industrial and Systems Engineering,
Texas A\&M University. (E-mail: \url{yan.li@tamu.edu}).}
}
\date{\vspace{-3ex}}
\begin{document}
{
\makeatletter
\addtocounter{footnote}{1} 
\renewcommand\thefootnote{\@fnsymbol\c@footnote}%
\makeatother
\maketitle
}

\maketitle

\begin{abstract}
We present some hardness results on finding the optimal policy for the static formulation of distributionally robust Markov decision processes.
We construct problem instances such that when the considered policy class is Markovian and non-randomized, finding the optimal policy is NP-hard.  When the considered policy class is Markovian and randomized, the robust value function possesses sub-optimal strict local minimizers, and finding the optimal policy is also NP-hard. The considered instances involve an ambiguity set with only two transition kernels.  
\end{abstract}


\section{Introduction}\label{sec_intro}

Consider a finite-horizon distributionally robust Markov decision process  $(\cbr{\cS_t}_{t=1}^T, \cbr{\cA_t}_{t=1}^T, \cP, \cbr{c_t}_{t=1}^T)$.
Here $\cS_t$ and $\cA_t$ denote the finite state and action spaces at stage $t$, 
$\cP$ denotes a compact set of well-defined transition kernels, 
and $c_t: \cS_t \times \cA_t \to \RR \cup \cbr{+\infty}$\footnote{We allow the cost to take the value of infinity to indicate potential dependence of the action space on the state.} denotes the cost function at stage $t$. 
For any policy $\pi$, its corresponding value function associated with a kernel $P \in \cP$ is given by 
\begin{align}\label{def_nonrobust_value}
V^\pi_P(s_1) = \EE^{\pi, P} \sbr{ \tsum_{t=1}^T c_t(S_t, A_t) | S_1 = s_1},
\end{align}
where $\EE^{\pi, P}\sbr{\cdot}$ denotes the expectation with respect to the law of stochastic process $\cbr{(S_t, A_t)}_{t=1}^T$ induced by $(\pi, P)$ (cf.  Ionescu Tulcea Theorem \cite{tulcea1949mesures}). 
The static formulation of distributionally robust Markov decision process (DRMDP) \cite{wiesemann2013robust,Tallec,Nilim2005,iyen} defines the robust value function as
\begin{align}\label{def_robust_value}
V^\pi_r(s_1) = \max_{P \in \cP} V^\pi_P(s_1),
\end{align}
for a given initial state $s_1 \in \cS_1$.
Correspondingly the policy optimization problem considers   
\begin{align}\label{def_static_rmdp}
\min_{\pi \in \Pi} V^\pi_r(s_1) = \min_{\pi \in \Pi} \max_{P \in \cP} V^\pi_P(s_1),
\end{align}
where $\Pi$ is a given set of Markovian policies.\footnote{
It is worth mentioning here that optimal policies for the static formulation \eqref{def_static_rmdp} can be history-dependent. 
We restrict our attention to Markovian policies here as history-dependent policies have a space complexity that can grow exponentially with respect to the horizon length. 
}
\revise{
Note that since $V^{\pi}_P(s)$ is continuous jointly in $(\pi, P)$, the corresponding minimization and maximization problems in \eqref{def_static_rmdp} are attainable. 
}

\revise{
We refer to \eqref{def_static_rmdp} as the static formulation of DRMDPs, since the kernel defining the expectation in \eqref{def_static_rmdp} is selected {\it prior} to the realization of the process $\cbr{(S_t, A_t)}_{t=1}^T$. 
In the current literature this is often referred to as  {\it robust MDPs}.
We emphasize the term {\it static} here as this is to be contrasted with the dynamic formulation introduced in \cite{li2025rectangularity}, which considers an adversarial nature that {\it dynamically} selects the kernels based on the process history.
It should be noted that while dynamic programming equations for the static formulation require various notions of rectangularity on $\cP$ \cite{wiesemann2013robust,Tallec,Nilim2005,iyen}, 
 they can be established for the dynamic formulation with no such requirements.
In addition, \cite{li2025rectangularity} shows that whenever dynamic programming for the static formulation exists, the static formulation reduces to the dynamic formulation, and using this observation one can reformulate \eqref{def_static_rmdp} so that the corresponding ambiguity set becomes $\mathrm{s}$-rectangular \cite{le2007robust, wiesemann2013robust}, while retaining the same optimal value function and optimal policy. 
Here, $\cP$ is said to be $\mathrm{s}$-rectangular if the selection of worst-case transition probabilities in \eqref{def_robust_value} can be made independently for each state.
A separate notion of distributionally robust MDP framework in \cite{xu2010distributionally} considers a nature choosing a distribution over the set of kernels, and to derive dynamic programming equations, the rectangular structure on the ambiguity set is assumed. It can be seen from \cite[Remark 2.5]{li2025rectangularity} that this formulation can be recovered by the dynamic formulation when the nature adopts a randomized strategy over the ambiguity set \cite{li2025rectangularity}.
}

In this manuscript we will focus on a subclass of \eqref{def_static_rmdp}, where $\cP = \cbr{P_{(1)}, P_{(2)}}$ consists of only two distinct transition kernels. 
Clearly,  \eqref{def_static_rmdp} reduces~to 
\begin{align}\label{static_rmdp_two_kernels}
\min_{\pi \in \Pi} \max \cbr{V^\pi_{P_{(1)}}(s_1), V^\pi_{P_{(2)}}(s_1) }.
\end{align}
Note that $\cP$ is non-rectangular \cite{le2007robust,wiesemann2013robust} unless $P_{(1)}$ and $P_{(2)}$ differ at only one state. 

\revise{
It has been known that for robust MDP \eqref{def_static_rmdp} with non-rectangular ambiguity sets, even evaluating $V^\pi_r$ becomes NP-hard in the infinite-horizon setting \cite{wiesemann2013robust}. 
For the finite-horizon setting, we believe a similar hardness result can be established by reducing from the bilinear minimization problem.}
Nevertheless, it is worth mentioning here that despite $\cP$ being non-rectangular in \eqref{static_rmdp_two_kernels}, the corresponding robust policy evaluation problem \eqref{def_robust_value}  involves solving two linear systems and hence can be solved in polynomial time. 
\revise{In view of this, our main problem of interest in this manuscript is to focus on potential sources of hardness that solely come from the policy optimization. 
While hardness results of optimizing robust MDPs have been established when the ambiguity set contains $K$ kernels (\cite{le2007robust, steimle2021multi}, see also Remark \ref{remark_multi}),  
such results are based on reduction from multi-model MDPs, whose main source of difficulty comes from its continuous belief state that has dimension $K$ and the ensuing curse of dimensionality.
It remains unclear to us whether the existing hardness results can be applied to \eqref{static_rmdp_two_kernels}.
}

\revise{
The main finding of this manuscript is to show that despite polynomial solvability of robust policy evaluation \eqref{def_robust_value}, the robust policy optimization itself for \eqref{static_rmdp_two_kernels} remains challenging. 
In particular, we show that finding the optimal non-randomized Markovian policy is NP-hard, based on a simple reduction from the set partition problem. 
On the other hand, when searching within the randomized Markovian policy class, we show that the robust objective \eqref{static_rmdp_two_kernels} possesses strict local minimizers, and finding the optimal policy is also NP-hard. 
A direct implication of this is, perhaps surprisingly, that the gradient-based approaches (e.g., \cite{agarwal2021theory, sutton1999policy}) enjoying global optimality guarantees for non-robust MDPs  do not converge for the simple robust MDP instance \eqref{static_rmdp_two_kernels} above.  
We also discuss briefly settings and conditions for which global optimality guarantees can be retained, and the difficulty in certifying some of these conditions. 
}

\revise{{\bf Notations.}}
Let us use $\Pi_{\mathrm{MD}}$ to denote the set of non-randomized Markovian  policies,
and use $\Pi_{\mathrm{MR}}$ to denote the set of randomized Markovian  policies.
To facilitate our discussion, for any integer $n > 0$, let us denote $[n] = \cbr{0, \ldots, n-1}$. 
We reserve $\norm{\cdot}$ for the standard Euclidean norm. 
\revise{For a finite set $X$, we use $\Delta_{X}$ to denote the probability simplex over $X$.}


\section{Computational Hardness}\label{sec_nonrandomized}

We begin by showing that finding the optimal non-randomized Markovian policy is NP-hard.

\begin{theorem}[NP-hardness for $\Pi_{\mathrm{MD}}$]\label{thrm_np_nonrandom}
The robust MDP  \eqref{static_rmdp_two_kernels} is NP-hard
when $\Pi = \Pi_{\mathrm{MD}}$. 
\end{theorem}

\begin{proof}
\revise{Our argument will be based on reduction from the set partition problem (NP-complete), which asks, for any given set $\cW = \cbr{w_1, w_2, \ldots, w_n}\subset \ZZ_+$, whether there exists a subset $\cW' \subset \cW$ such that 
$\tsum_{w \in \cW'} w = \tsum_{w \in {\cW \setminus \cW'}} w$.}
Now let us consider the following construction of a robust MDP instance.
For any $1 \leq t \leq n$, define  
$\cS_{2t-1} = \cbr{2t-1}$,
$\cS_{2t} = \cbr{0, 1}$, 
 $\cA_{2t-1} = \cbr{0,1}$,
 $\cA_{2t} = \emptyset$.
Let the cost function be defined as 
\begin{align*}
 c_{2t-1} (\cdot) = 0,
c_{2t} (0) = w_t,  ~ c_{2t}(1) = 0.
\end{align*} 
Consider two transition kernels defined by 
\begin{align*}
& P_{(1)}(S_{2t} = 0 | S_{2t-1} = 2t-1 , A_{2t-1} = 0) = 1 , ~ P_{(1)}(S_{2t} = 1 | S_{2t-1} = 2t-1 , A_{2t-1} = 1) = 1 ,  \\
& P_{(2)}(S_{2t} = 1 | S_{2t-1} = 2t-1 , A_{2t-1} = 0) = 1 , ~ P_{(2)}(S_{2t} = 0 | S_{2t-1} = 2t-1 , A_{2t-1} = 1) = 1 ,
\end{align*}
and $ P_{(1)}(S_{2t+1} = 2t+1 | S_{2t}  )  =  P_{(2)}(S_{2t+1} = 2t+1 | S_{2t}  ) = 1$. 
Given any non-randomized policy $\pi \in \Pi_{\mathrm{MD}}$, let 
$\cS_\pi = \cbr{t: \pi(S_{2t-1}) = 0, 1 \leq t \leq n}$. We have 
\begin{align*}
V^\pi_{P_{(1)}}(s_1) = \tsum_{t \in \cS_\pi} w_t ,
~ 
V^\pi_{P_{(2)}}(s_1) = \tsum_{t \notin \cS_\pi} w_t  ,
\end{align*}
which implies 
\begin{align*}
V^\pi_r(s_1) = \max \cbr{
\tsum_{t \in \cS_\pi} w_t, \tsum_{t \notin \cS_\pi} w_t
}.
\end{align*}
Hence we obtain  
$\min_{\pi \in \Pi_{\mathrm{MD}}} V^\pi_r(s_1) \geq \frac{\tsum_{ t = 1}^n w_t}{2}$, 
and equality holds if and only if there exists 
$\cW' \subset \cW$ such that 
$\tsum_{w \in \cW'} w = \tsum_{w \in {\cW \setminus \cW'}} w$. 
\end{proof}



We then construct problem instances for which \eqref{static_rmdp_two_kernels} possesses a sub-optimal local minimizer among the set of randomized Markovian policies.

\revise{
\begin{definition}
We say a policy $\pi \in  \Pi_{\mathrm{MR}}$ is a sub-optimal strict local minimizer of \eqref{static_rmdp_two_kernels} if there exists $\delta > 0$ and $\pi^* \in \Pi_{\mathrm{MR}}$ such that 
\begin{align*}
V^{\pi^*}_r(s_1) <  V^\pi_r(s_1) < V^{\pi'}_r(s_1), ~ \forall \pi' \neq \pi, \norm{\pi' - \pi} \leq \delta,  \pi' \in \Pi_{\mathrm{MR}},
\end{align*}
where $\norm{\pi' - \pi} = \sup_{t} \sup_{s_t \in \cS_t}  \norm{\pi_t(\cdot|s_t) - \pi_t'(\cdot|s_t)}$.
\end{definition}
}

\begin{theorem}[Local minimizer for $\Pi_{\mathrm{MR}}$]\label{thrm_strict_local_min}
There exists a  robust MDP instance such that \eqref{static_rmdp_two_kernels} has a sub-optimal strict local minimizer 
when $\Pi = \Pi_{\mathrm{MR}}$. 
\end{theorem}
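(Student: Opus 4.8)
The plan is to exploit that $\Pi_{\mathrm{MR}}$ is a Cartesian product of probability simplices $\prod_{t,s}\Delta(\cA_t)$ — a compact convex polytope — and that for each fixed kernel $P$ the map $\pi\mapsto V^\pi_P(s_1)$ is \emph{multilinear} in the coordinates of $\pi$ (affine in each $\pi_t(\cdot\,|\,s)$ separately; this follows from \eqref{def_nonrobust_value} via the Bellman recursion and backward induction). Consequently $V^\pi_r(s_1)=\max\{V^\pi_{P_{(1)}}(s_1),V^\pi_{P_{(2)}}(s_1)\}$ is a pointwise maximum of two multilinear functions over a polytope, which in general is neither convex nor quasiconvex. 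I would realize this pathology in a small explicit instance.

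First I would set up the instance: an MDP with only a handful of decision states, arranged (chained) so that $f_i(\pi):=V^\pi_{P_{(i)}}(s_1)$ are low-degree multilinear polynomials in a few randomization parameters, and then pick the essentially deterministic transition rules of $P_{(1)},P_{(2)}$ together with the stage costs $c_t$ so that $f_1,f_2$ acquire prescribed coefficients. A single non-degenerate randomization point does not suffice (the maximum of two affine pieces is convex), and in fact one has to use several \emph{entangled} randomization points, with $P_{(1)}$ and $P_{(2)}$ disagreeing at more than one state — i.e.\ the genuinely non-rectangular regime pointed out after \eqref{static_rmdp_two_kernels}.

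Next I would pin down a candidate $\pi^\star$ at which both kernels are active, $V^{\pi^\star}_{P_{(1)}}(s_1)=V^{\pi^\star}_{P_{(2)}}(s_1)=:V_0$ (this $\pi^\star$ is genuinely randomized at some state, e.g.\ a $\tfrac12$ at the root — a single multilinear $f_i$ can have a strict local minimum only at a vertex of the polytope, so a non-deterministic strict local minimizer of the maximum \emph{requires} both pieces to be active). Strictness of the local minimum I would verify in two layers: (a) first order — every feasible direction $d$ in the tangent cone at $\pi^\star$ satisfies $\max\{\langle\nabla f_1(\pi^\star),d\rangle,\langle\nabla f_2(\pi^\star),d\rangle\}\ge 0$, i.e.\ there is no feasible descent direction for the max, which is arranged by making $\nabla f_1(\pi^\star)$ and $\nabla f_2(\pi^\star)$ point oppositely relative to the feasible cone; (b) second order — along the remaining directions, where both inner products vanish, $\max\{f_1,f_2\}$ strictly increases, which, since each $f_i$ carries no curvature of its own along a privileged direction, forces the two kernels to bend the value surface in complementary ways. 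Layer (b) is the heart of the argument and is exactly where the non-convexity of the individual $V^\pi_{P_{(i)}}$ is indispensable. Finally I would exhibit a feasible policy $\bar\pi$ — by the non-convexity, naturally a vertex of the polytope, i.e.\ a deterministic policy — with $V^{\bar\pi}_r(s_1)<V_0$, which makes $\pi^\star$ sub-optimal and finishes the proof.

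The step I expect to be the main obstacle is the joint realizability. The coefficients of $f_1,f_2$ cannot be prescribed freely: the requirement that transition ``probabilities'' lie in $[0,1]$ sign-links the coefficient of each randomization variable to those of its products with upstream variables, and these constraints are in tension with what is needed for $\max\{f_1,f_2\}$ to simultaneously (i) admit a strict local minimum at $\pi^\star$ and (ii) fail to be globally optimal there. Threading this needle — finding the smallest number of decision states and the exact costs/transitions satisfying all constraints at once — is the delicate part; once the instance is fixed, Steps 2–3 reduce to a direct (if slightly tedious) verification.
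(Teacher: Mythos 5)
Your proposal correctly identifies the structural source of the phenomenon---$V^\pi_r(s_1)$ is a pointwise maximum of two multilinear functions of the policy over a product of simplices---but as written it is not a proof: the theorem asserts the \emph{existence} of an instance, and you never exhibit one. You yourself flag the decisive step (the ``joint realizability'' of the coefficients of $f_1,f_2$ under the $[0,1]$ constraints on transition probabilities) as an unresolved obstacle, so the argument stops exactly where the content of the theorem begins. In addition, two design constraints you impose on yourself are not required by the statement and make the needle harder to thread than it needs to be: (i) the local minimizer need not be genuinely randomized---the theorem only asks for a sub-optimal strict local minimizer within $\Pi_{\mathrm{MR}}$, and a deterministic policy (a vertex of the polytope) qualifies; (ii) consequently no second-order / ``complementary curvature'' layer is needed, since at a vertex a first-order, piecewise-linear argument suffices. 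Your claim that a strict local minimizer of the max at a non-vertex forces both pieces active is true but irrelevant once (i) is dropped.

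For comparison, the paper closes the gap with a three-stage instance with two actions per decision state: costs $c_1=c_2=0$ and terminal cost $A_{ij}$ with $A=\left(\begin{smallmatrix}1&0\\-1&1\end{smallmatrix}\right)$, kernel $P_{(1)}$ routing action $i$ to state $i$ and $P_{(2)}$ to the flipped state, so that $V^\pi_r(s_1)$ is the maximum of two explicit bilinear forms in $(\pi^1,\pi^2)$ as in \eqref{def_robust_val_n_1}. Partially minimizing over the second-stage policy (using convexity and symmetry of the inner problem) yields $f(\pi^1)=\min\cbr{\pi^1_1,\ \pi^1_0-\pi^1_1}$, a piecewise-linear function for which the deterministic first-stage policy $\overline{\pi}^1=(1,0)$ is a strict local minimizer with value $0$, while the global minimum equals $-1$; pairing $\overline{\pi}^1$ with the unique minimizer $\overline{\pi}^2$ of the inner problem and a short case analysis ($\pi^1\neq\overline{\pi}^1$ versus $\pi^1=\overline{\pi}^1$) lifts this to a sub-optimal strict local minimizer of $V^\pi_r(s_1)$ itself. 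If you wish to salvage your route, the fastest fix is to drop the requirement that $\pi^\star$ be randomized and to organize the verification by partial minimization over the downstream policy, as above, rather than by tangent-cone and second-order conditions; otherwise the missing explicit construction must still be supplied before the argument counts as a proof.
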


\begin{proof}
Given an integer $n > 0$ and $A \in \RR^{n \times n}$, 
consider a three-stage Markov decision process with $\cS_1 = \cA_1 = \cS_2 = \cA_2 = [n]$, $\cS_3 = [n] \times [n]$, $\cA_3 = \emptyset$. 
The per-stage cost function is given by $c_1(\cdot) = c_2(\cdot) = 0$, and $c_3(i, j ) = A_{ij}$ for $(i,j) \in [n] \times [n]$. 
Consider transition kernel $P_{(1)}$  defined as 
\begin{align*}
P_{(1)} (S_2 = i| S_1 = s_1, A_1 = i) = 1, ~ 
P_{(1)}( S_3 = (i,j) | S_2 = i, A_2 = j) = 1 , ~ \forall i, j \in [n].
\end{align*}
That is, starting from $s_1$, with action $A_1 = i$, the state transits to $S_2 = i$. Upon taking action $A_2 = j$, the state transits to $S_3 = (i,j)$. 
In addition, the transition kernel $P_{(2)}$ is defined as  
\begin{align*}
P_{(2)} (S_2 = n-1-i | S_1 = s_1, A_1 = i) = 1, ~ 
P_{(2)} ( S_3 = (n-1-i,j) | S_2 = i, A_2 = j) = 1 , ~ \forall i, j \in [n].
\end{align*}
That is, starting from $s_1$, with action $A_1 = i$, the state transits to $S_2 = n-1-i$. Upon taking action $A_2 = j$, the state transits to $S_3 = (i,j)$.

For notational simplicity, given a randomized policy $\pi = (\pi^1, \pi^2)$ with $\pi^t: \cS_t \to \Delta_{\cA_t}$, let us denote 
\begin{align*}
\pi^1_i = \pi^1(A_1 = i | S_1 = s_1), 
\pi^2_{ij} = \pi^2 (A_2 = j | S_2 = i), ~ \forall i,j \in [n].
\end{align*}
The value function $V^\pi_{(1)}$ associated with kernel $P_{(1)}$ is given by 
\begin{align*}
V^\pi_{(1)}(s_1) = 
\tsum_{i \in [n]} \pi^1_i \rbr{ \tsum_{j \in [n]} \pi^2_{ij} A_{ij} } ,
\end{align*}
and the value function $V^\pi_{(2)}$ associated with kernel $P_{(2)}$ is 
\begin{align*}
V^\pi_{(2)}(s_1) = 
\tsum_{i \in [n]} \pi^1_i \rbr{ \tsum_{j \in [n]} \pi^2_{(n-1-i) j} A_{ij} } .
\end{align*}
Hence the robust value function is defined as 
\begin{align*}
V^\pi(s_1) = \max \cbr{
\tsum_{i \in [n]} \pi_i^1 \rbr{ \tsum_{j \in [n]} \pi_{ij}^2 A_{ij} }, 
\tsum_{i \in [n]} \pi_i^1 \rbr{ \tsum_{j \in [n]} \pi_{(n-1-i) j}^2 A_{ij} }
}.
\end{align*}
Now consider $n = 2$ 
and 
\begin{align*}
A = \begin{bmatrix}
1 & 0 \\
-1 & 1
\end{bmatrix}.
\end{align*}
Consequently   we have
\begin{align}\label{def_robust_val_n_1}
V^\pi(s_1) = \max \cbr{
\pi^1_0 \pi^2_{00} + \pi^1_1 \rbr{ -\pi^2_{10} + \pi^2_{11} } ,
~
\pi^1_0 \pi^2_{10} + \pi^1_1 \rbr{ -\pi^2_{00} + \pi^2_{01} } 
}.
\end{align}
Let us define  
 \begin{align}\label{partial_min_pi_2}
 f(\pi^1) = \min_{\pi^2 \revise{\in \Delta_{\cA_2}} } V^\pi(s_1) 
 = \min_{\pi^2_{00}, \pi^2_{10} \in [0,1]} g_{\pi^1}(\pi^2_{00}, \pi^2_{10}) ,
 \end{align}
where 
$g_{\pi^1} (a,  b) 
= \max \cbr{
 \pi^1_0 a + \pi^1_1 \rbr{ 1 - 2 b } ,
\pi^1_0 b + \pi^1_1 \rbr{ 1 -2 a } 
 }
 $. 
 Note that $g_{\pi^1}(\cdot)$ is convex and symmetric, i.e., $g_{\pi^1}(a,b ) = g_{\pi^1} (b, a)$. 
 Hence there must exist an optimal solution $(a^*, b^*)$  of \eqref{partial_min_pi_2} satisfying $a^* = b^*$,~and 
 \begin{align}\label{partial_min_wrt_pi_1}
  f(\pi^1) = \min_{\pi^2 \revise{ \in \Delta_{\cA_2}}} V^\pi(s_1) 
  = \min_{a \in [0,1]} \pi_0^1 a + \pi_1^1(1-2a) 
    = 
  \min \cbr{\pi^1_1, \pi^1_0 - \pi^1_1} .
 \end{align}
 From the above observation, it is then clear that $\overline{\pi}^1 = (1, 0)$ is a strict local minimizer of $f(\cdot)$.
 That is, there exists $\delta > 0$, such that 
 $f(\pi^1) > f(\overline{\pi}^1)$ for any $\norm{\pi^1 - \overline{\pi}^1} \leq \delta$ and $\pi^1 \neq \overline{\pi}^1$. 
 In addition, $\overline{\pi}^1$ is sub-optimal for $f(\cdot)$ as
 \begin{align*}
 \min_{\pi \revise{\in \Pi_{\mathrm{MR}}}} V^\pi(s_1)  = \min_{\pi^1 \revise{\in \Delta_{\cA_1} }} f(\pi^1) = -1 < 0 = f(\overline{\pi}^1).
 \end{align*}
 Let $\overline{\pi}^2 \in \Argmin_{\pi^2 \revise{\in \Delta_{\cA_2}}} V^{\overline{\pi}^2, \pi^2}(s_1)$. 
  That is, 
 $f(\overline{\pi}^1) = V^{\overline{\pi}^1, \overline{\pi}^2}(s_1)$. 
 Note that from \eqref{partial_min_pi_2}, the definition of $g_{\overline{\pi}^1}(\cdot)$ and the choice of $\overline{\pi}^1$,  we have that $\overline{\pi}^2$ is unique. 
 Let us write $\overline{\pi} \coloneqq (\overline{\pi}^1, \overline{\pi}^2)$. 

We proceed to show that $(\overline{\pi}^1, \overline{\pi}^2)$ is a strict local minimizer of $V^\pi(s_1)$ but not a global minimizer. 
Indeed, consider any $\pi \coloneqq (\pi^1, \pi^2) $ such that $\norm{\pi - \overline{\pi} } \leq \delta$ and $\pi \neq \overline{\pi}$.
If $\pi^1 \neq \overline{\pi}^1$, then  
\begin{align*}
V^{\pi^1, \pi^2} (s_1) 
\overset{(a)}{\geq}  f(\pi^1) 
\overset{(b)}{>}  f(\overline{\pi}^1) 
\overset{(c)}{=} V^{\overline{\pi}^1, \overline{\pi}^2} (s_1) 
\overset{(d)}{>}  \min_{\pi \revise{\in \mathrm{\Pi}_{\mathrm{MR}}}} V^\pi(s_1) ,
\end{align*}
where $(a)$ follows from the definition of $f(\cdot)$ in \eqref{partial_min_pi_2}, 
 $(b)$ follows from that $\overline{\pi}$ is a strict local minimizer of $f(\cdot)$ in $\cbr{\pi^1: \norm{\pi^1 - \overline{\pi}^1} \leq \delta}$, 
$(c)$ follows from the definition of $\overline{\pi}^2$, and $(d)$ follows from the sub-optimality of $\overline{\pi}^1$ for $f(\cdot)$.
On the other hand, if $\pi^1 = \overline{\pi}^1 $, then we must have $\pi^2 \neq \overline{\pi}^2$, and hence 
\begin{align*}
V^{\pi^1, \pi^2} (s_1) 
\overset{(e)}{>}  f(\pi^1) 
=  f(\overline{\pi}^1) 
= V^{\overline{\pi}^1, \overline{\pi}^2} (s_1) 
>  \min_{\pi \revise{\in \mathrm{\Pi}_{\mathrm{MR}}}} V^\pi(s_1) ,
\end{align*}
where $(e)$ follows from that $\overline{\pi}^2$ is the unique minimizer of $\min_{\pi^2 \revise{\in \Delta_{\cA_2} }} V^{\overline{\pi}^1, \pi^2}(s_1)$. 
The above two observations jointly imply 
$
V^{\pi^1, \pi^2} (s_1)  > V^{\overline{\pi}^1, \overline{\pi}^2} (s_1)  >  \min_{\pi \revise{\in \mathrm{\Pi}_{\mathrm{MR}}}} V^\pi(s_1) .
$
This concludes the proof. 
\end{proof}

\begin{remark}\label{remark_stationarity_np_reduction}
It could be interesting to note that with a fixed transition kernel, the value function $V^\pi_P$ defined in \eqref{def_nonrobust_value} does not possess sub-optimal local minimizers when $\Pi = \Pi_{\mathrm{MR}}$ (cf. \cite[Lemma 4.1]{agarwal2021theory}). 
In particular, it is shown in \cite{agarwal2021theory} that a gradient-dominance condition holds for the non-robust value function $V^\pi_{P}$. 
Similar gradient-dominance conditions have been proposed in the context of two-player zero-sum Markov games \cite{daskalakis2020independent}, and later for   robust MDPs \eqref{def_static_rmdp} \cite{wang2023policy}, with the idea of replacing the non-smooth objective value by its Moreau envelope. 
Provided such a gradient dominance condition holds, one can establish the convergence of the subgradient method for these two classes of problems with an argument first presented in \cite{davis2019stochastic} for generic non-convex non-smooth optimization.
Unfortunately given Theorem \ref{thrm_strict_local_min}, it is clear that gradient-dominance would not hold for robust MDPs \eqref{def_static_rmdp} in the general setting.  

On the other hand, it could be worth noting that the approach of showing gradient dominance taken in \cite{daskalakis2020independent, wang2023policy} can be salvaged if the corresponding worst-case opponent (resp. nature)'s policy (resp. kernel) is unique. Although this does not hold directly for Markov games, one can replace the opponent's feasible set (probability simplex) by its strictly convex inner approximation bounded by $\epsilon$ in Hausdorff distance, which in turn guarantees the uniqueness of the opponent's worst-case policy via the dynamic programming equation.  
The gradient dominance  in \cite{daskalakis2020independent} can then be recovered by taking $\epsilon$ to $0$ and using the continuity of the proximal mapping. 
The same inner approximation approach could be used to establish the gradient dominance of robust MDPs \eqref{def_static_rmdp} with convex and compact $\mathrm{s}$-rectangular ambiguity sets. 
For \eqref{def_static_rmdp} with general ambiguity sets, we are not aware of a clean approach that can avoid assuming a unique worst-case kernel for every policy.
Note that such an assumption, on the other hand, would be quite challenging to verify due to the lack of dynamic programming equations. 
In particular, if the ambiguity set contains a finite number of kernels, then the uniqueness of the worst-case kernel for every policy in fact reduces the robust MDP to a non-robust MDP. 
\end{remark}

\begin{proposition}
Suppose $\cP = \cbr{P_{(1)}, \ldots, P_{(K)}}$. If for every policy the corresponding worst-case kernel is unique, then there exists $P^* \in \cP$ such that 
$V^\pi_{P^*}(s_1) > V^\pi_{P}(s_1)$ for any $P \in \cP \setminus \cbr{P^*}$ and any $\pi \in \Pi_{\mathrm{MR}}$. 
Consequently $V_r^\pi(s_1) \coloneqq \max_{1 \leq k \leq K} \{V^\pi_{P_{(k)}}(s_1) \} = V_{P^*}^\pi(s_1)$ for any $\pi \in \Pi_{\mathrm{MR}}$.
\end{proposition}

\begin{proof}
Let us define $\Pi_k = \{\pi \in \Pi_{\mathrm{MR}}: V^{\pi}_{P_{(k)}} (s_1) > \max_{k' \neq k} V^{\pi}_{P_{(k')}} (s_1) \}$.
Since $V^{\pi}_{P}(s_1)$ is continuous in $\pi$,  $\Pi_k$ is relatively open in $\Pi_{\mathrm{MR}}$. 
Suppose there exists $k^* \neq (k^*)'$ such that $\Pi_{k^*}$ and $\Pi_{(k^*)'}$ are both non-empty. 
Since the worst-case kernel is unique for every $\pi \in \Pi_{\mathrm{MR}}$, we have that $\cbr{\Pi_k}_{k=1}^K$ are disjoint,
and $\cup_{1 \leq k \leq K} \Pi_k = \Pi_{\mathrm{MR}}$.
Combining this with each $\Pi_k$ being relatively open, we have that  $\Pi_{k^*}$ and $\cup_{{k}' \neq k^*} \Pi_{{k}'}$ are relatively open and disjoint, and their union is $\Pi_{\mathrm{MR}}$. This contradicts with the fact that  $\Pi_{\mathrm{MR}}$ is~connected. 
\end{proof}

When the maximization problem in \eqref{static_rmdp_two_kernels} involves both ambiguous cost functions $\cbr{c_t}$ and transition kernel $P$, \cite{le2007robust} \revise{proposes an approach that attempts to establish the NP-completeness of \eqref{static_rmdp_two_kernels} based on the reduction from Path with Forbidden Pairs \cite{gabow1976two}.}
It appears to us that the reduction presented in \cite{le2007robust} is  not polynomial, as the transition kernel constructed therein needs to memorize the visitation history of the forbidden pairs and hence is non-Markovian.

\begin{remark}\label{remark_multi}
While our focus is on ambiguity sets with two transition kernels,  if \eqref{def_static_rmdp} involves $\cP$ consisting of $K$ distinct kernels and the complexity characterization is allowed to depend on $K$,  \cite{le2007robust} points out that finding the optimal randomized Markovian policy for \eqref{def_static_rmdp}  can be shown to be NP-complete by reducing from the SAT problem.  
A similar and explicit reduction from 3-CNF-SAT can be found in  \cite{steimle2021multi}, which is originally developed for showing the NP-completeness of multi-model MDPs. 
\revise{Note that the approach in \cite{steimle2021multi} also allows the cost function to be ambiguous along with the transition kernel.}
It might be  worth noting here that the NP-completeness for robust MDPs with $K$ kernels does not imply the existence of local minimizers or the hardness of \eqref{static_rmdp_two_kernels}. 
\end{remark}

By adopting the set partition problem again, one can indeed strengthen Theorem \ref{thrm_np_nonrandom} by establishing the NP-hardness of finding the optimal policy among Markovian randomized policies $\Pi_{\mathrm{MR}}$.

\begin{theorem}[NP-hardness for $\Pi_{\mathrm{MR}}$]
The robust MDP  \eqref{static_rmdp_two_kernels} is NP-hard
when $\Pi = \Pi_{\mathrm{MR}}$. 
\end{theorem}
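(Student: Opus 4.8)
The plan is to reduce the NP-complete \textsc{Partition} problem to the decision version of \eqref{static_rmdp_two_kernels} with $\Pi = \Pi_{\mathrm{MD}}$: given the two-kernel instance and a threshold $\theta$, decide whether $\min_{\pi \in \Pi_{\mathrm{MD}}} \max\cbr{V^\pi_{P_{(1)}}(s_1), V^\pi_{P_{(2)}}(s_1)} \le \theta$. Recall \textsc{Partition}: given positive integers $w_1,\dots,w_n$ with $\sum_{t=1}^n w_t = 2W$, decide whether there is $S \subseteq \cbr{1,\dots,n}$ with $\sum_{t\in S} w_t = W$. An answer to the displayed decision problem clearly yields the optimizer's value, so hardness of the former implies the claimed hardness.

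\textbf{Construction.} I would build a layered-chain MDP of horizon $O(n)$. For each $t\in\cbr{1,\dots,n}$ there is a single decision state $\sigma_t$ with action set $\cbr{0,1}$, so a non-randomized Markovian policy $\pi$ is in bijection with the set $S=\cbr{t:\pi(\sigma_t)=1}$; consistency is automatic because $\sigma_t$ is visited exactly once on every trajectory. Downstream of $\sigma_t$ sit two auxiliary states, $u_t$ with stage cost $w_t$ and $z_t$ with stage cost $0$, each transitioning (under both kernels) into $\sigma_{t+1}$; all remaining costs are $0$. The two kernels differ \emph{only} in how they route the action chosen at the $\sigma_t$'s: under $P_{(1)}$, action $1$ leads to $u_t$ and action $0$ to $z_t$, while under $P_{(2)}$ the routing is swapped. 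A one-line Bellman recursion over the chain then gives $V^\pi_{P_{(1)}}(\sigma_1)=\sum_{t\in S} w_t$ and $V^\pi_{P_{(2)}}(\sigma_1)=\sum_{t\notin S} w_t = 2W-\sum_{t\in S} w_t$. Note $\cP=\cbr{P_{(1)},P_{(2)}}$ is non-rectangular (the kernels differ at every $\sigma_t$), so the instance lies in the subclass under study.

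\textbf{Correctness and the main subtlety.} For any $S$, $\max\cbr{V^\pi_{P_{(1)}},V^\pi_{P_{(2)}}} \ge \tfrac12\big(V^\pi_{P_{(1)}}+V^\pi_{P_{(2)}}\big)=W$, with equality iff $\sum_{t\in S} w_t = W$; since the $w_t$ are integers, a non-partitionable instance forces $\max\cbr{\cdot,\cdot}\ge W+1$. Hence $\min_{\pi\in\Pi_{\mathrm{MD}}}\max\cbr{V^\pi_{P_{(1)}},V^\pi_{P_{(2)}}}\le W$ if and only if the \textsc{Partition} instance is a yes-instance, and the encoding size of the MDP is polynomial in the bit-length of $w_1,\dots,w_n$, so this is a polynomial-time many-one reduction. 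The one genuinely load-bearing point, rather than any hard calculation, is the role of $\Pi_{\mathrm{MD}}$: a \emph{randomized} policy that plays each action with probability $\tfrac12$ at every $\sigma_t$ yields $V^\pi_{P_{(1)}}=V^\pi_{P_{(2)}}=W$ for \emph{every} instance, so $\min_{\pi\in\Pi_{\mathrm{MR}}}\max\cbr{V^\pi_{P_{(1)}},V^\pi_{P_{(2)}}}$ is always exactly $W$ and no hardness survives under $\Pi_{\mathrm{MR}}$ — which is consistent with the paper treating the randomized case via a separate phenomenon. I would therefore be careful to phrase the reduction so that the threshold gap is created purely by the integrality of $w_t$ and the impossibility of the adversary-averaging trick for deterministic policies. (If strong rather than weak NP-hardness were wanted, one could instead reduce from $3$-SAT, attaching to each clause a small verification sub-MDP and using the second kernel to carry a gadget penalizing inconsistent assignments across clause copies; the extra bookkeeping there would be the principal added difficulty.)
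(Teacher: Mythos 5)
Your proposal is correct and follows essentially the same route as the paper: a layered chain of single-visit decision states, two kernels that swap the routing of the two actions so that $V^\pi_{P_{(1)}}=\sum_{t\in S}w_t$ and $V^\pi_{P_{(2)}}=\sum_{t\notin S}w_t$, and a reduction from \textsc{Partition} via the bound $\max\geq \frac{1}{2}\sum_t w_t$ with equality iff a balanced partition exists. Your added remarks on the integrality gap and on why randomization destroys the hardness are sound but not needed for the theorem as stated.
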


\begin{proof}
Similar to Theorem \ref{thrm_np_nonrandom}, let us fix $\cW = \cbr{w_1, w_2, \ldots, w_n}\subset \ZZ_+$, and consider the following robust MDP instance. 
Define $\cS = \cup_{t=1}^n \cS_t$, where 
$\cS_t = \cbr{A_t, B_t, C_t, D_t}$.
At each $A_t$ and $B_t$, there are two actions $\cbr{L, R}$, respectively, and there is no available action at other states.
The cost function only depends on the state and is given by  
\begin{align*}
c_t(B_t) = w_t, c_t(C_t) = 1, ~ c_t(A_t) = c_t(D_t) = 0.
\end{align*}
Consider the transition kernels $P_{(1)}$ and $P_{(2)}$ given as follows, 
\begin{align*}
& P_{(1)}(B_t| A_t, L) = 1, ~ P_{(1)}(D_t | A_t, R) = 1, ~ P_{(1)}(C_t|B_t, L) =  1, P_{(1)}(D_t|B_t, R) = 1,  \\ &  P_{(1)}(D_t |C_t) = 1, P_{(1)}(A_{t+1} |D_t) = 1; \\
& P_{(2)}(B_t| A_t, R) = 1, ~ P_{(2)}(D_t | A_t, L) = 1, ~ P_{(2)}(C_t|B_t, R) =  1, P_{(2)}(D_t|B_t, L) = 1,  \\ &  P_{(2)}(D_t |C_t) = 1, P_{(2)}(A_{t+1} |D_t) = 1.
\end{align*} 
Note that $P_{(1)}$ and $P_{(2)}$ differ from each other only at $A_t$ and $B_t$.
For any $\pi \in \Pi_{\mathrm{MR}}$, define 
$\pi_{t, A} = \pi_t(L|A_t), \pi_{t, B} = \pi_t(L | B_t)$, then we have 
\begin{align}\label{np_mr_value}
V^{\pi}_{P_{(1)}} (A_1) = \tsum_{t=1}^n \pi_{t,A} (w_t + \pi_{t,B} ), ~
V^{\pi}_{P_{(2)}} (A_1)  = \tsum_{t=1}^n (1- \pi_{t,A}) \sbr{ w_t + (1 - \pi_{t, B}) }.
\end{align}
Hence we have 
$
V^{\pi}_{P_{(1)}} (A_1) + V^{\pi}_{P_{(2)}} (A_1)   = \tsum_{t=1}^n w_t + \tsum_{t=1}^n \sbr{\pi_{t,A} \pi_{t,B} + (1-\pi_{t,A}) (1-\pi_{t,B})} 
\geq 
\tsum_{t=1}^n w_t$.
This in turn implies that 
\begin{align*}
\min_{\pi \in \Pi_{\mathrm{MR}}}  V^\pi_r(A_1) & = \min_{\pi \in \Pi_{\mathrm{MR}}} \max \cbr{V^{\pi}_{P_{(1)}} (A_1), V^{\pi}_{P_{(2)}} (A_1)} 
\overset{(a)}{\geq} \frac{\tsum_{t=1}^n w_t}{2}.
\end{align*}
We claim that the equality holds in $(a)$ if and only if there exists $\cN \subset \cbr{1,\ldots, n}$ such that 
$\tsum_{t \in \cN}  w_t =  \tsum_{t \notin \cN}  w_t $. 
Indeed, suppose $\cN \subset \cbr{1,\ldots, n}$ satisfies  $\tsum_{t \in \cN} w_t = \tsum_{t \notin \cN}  w_t $ for some $\cN \subset \cbr{1, \ldots, n}$. 
Then one can consider policy $\pi^*$ with $\pi^*_{t,A} = 1$ and $\pi^*_{t,B} = 0$ for $t \in \cN$, together with $\pi^*_{t,A} = 0$ and $\pi^*_{t, B} = 1$ for $t \notin \cN$. 
In this case, we have from \eqref{np_mr_value} that  
$
V^{\pi^*}_{P_{(1)}}(A_1) = \tsum_{t \in \cN} w_t, ~ V^{\pi^*}_{P_{(2)}}(A_1) = \tsum_{t \notin \cN} w_t,
$
and consequently $V^{\pi^*}_r(A_1) =  \rbr{\tsum_{t=1}^n w_t}/{2}$. 
Clearly $\pi^*$ is an optimal policy and $(a)$ holds with equality.
On the other hand, suppose equality holds in $(a)$, then 
the corresponding optimal policy $\pi^* \in \Pi_{\mathrm{MR}}$ must be non-randomized and satisfies $(\pi^*_{t,A}, \pi^*_{t, B}) \in \cbr{(1,0), (0,1)}$ for $1 \leq t \leq n$.
In this case, define $\cN = \cbr{t: \pi^*_{t,A} = 1 }$,  then one can readily see from \eqref{np_mr_value} that 
$
V^{\pi^*}_{P_{(1)}}(A_1) = \tsum_{t \in \cN} w_t, V^{\pi^*}_{P_{(2)}}(A_1) = \tsum_{t \notin \cN} w_t.
$
Combining this with $V^{\pi^*}_r(A_1) = \rbr{\tsum_{t=1}^n w_t}/{2}$, we obtain 
$\tsum_{t \in \cN} w_t = \tsum_{t \notin \cN}  w_t $.
\end{proof}


\paragraph{Dynamic Formulation.}

As discussed in Section \ref{sec_intro}, in the static DRMDPs \eqref{static_rmdp_two_kernels}, the transition kernel $P \in \cP$ is selected before the realization of the process $\cbr{(S_t, A_t)}_{t=1}^T$, and the non-rectangularity of $\cP$ creates coupling between transition probabilities across states. 
This unfortunately destroys the dynamic decomposition of the corresponding distributionally robust functional in \eqref{static_rmdp_two_kernels}, and  the dynamic programming equations no longer exist. 
\revise{This is to our belief the root source of the hardness results presented in Section~\ref{sec_nonrandomized}.}

\revise{
To ensure the existence of dynamic programming equations, we can consider the dynamic formulation of DRMDPs \cite{li2025rectangularity} that allows an adversarial nature to adopt a policy that selects the transition kernel at every state, based on the process history leading up to the current state.
Unlike static DRMDPs that require rectangularity of $\cP$,  
for dynamic DRMDPs one can always obtain the dynamic equations without such requirements.
That is, the optimal value functions $\cbr{V_t}$, and the robust value functions of a given policy $\cbr{V^\pi_t}$ in the dynamic DRMDPs  are  characterized by  
\begin{align*}
V_{t}(s) & = \min_{\pi \in \Pi} \max_{P \in \cP} \tsum_{a_t \in \cA_t}  \pi_t(a_t |s_t) \sbr{
c_t(s_t, a_t) +  \tsum_{s_{t+1} \in \cS_{t+1}} P(S_{t+1} = s_{t+1} | S_t = s_t, A_t = a_t) V_{t+1}(s_{t+1}) 
}, \\
V^\pi_{t}(s) & = \max_{P \in \cP} \tsum_{a_t \in \cA_t}  \pi_t(a_t |s_t) \sbr{
c_t(s_t, a_t) +  \tsum_{s_{t+1} \in \cS_{t+1}} P(S_{t+1} = s_{t+1} | S_t = s_t, A_t = a_t) V^\pi_{t+1}(s_{t+1}) 
}
\end{align*}
for general compact $\cP$.
In contrast to the static DRMDPs,  computing the optimal value functions and policies is $R$-polynomial 
for both 
$\Pi = \Pi_{\mathrm{MR}}$ and $\Pi = \Pi_{\mathrm{MD}}$
when $\cP$ is convex, compact, and equipped with a separating oracle.
One can also consider establishing the gradient-dominance condition for the above value function $V^{\pi}_t$ when $\cP$ is convex and closed,  with the inner approximation argument discussed in Remark \ref{remark_stationarity_np_reduction}.
}


\section{Concluding Remarks}

We conclude by noting that the hard instances constructed within the manuscript can be extended to the discounted infinite-horizon setting by taking the union of the per-stage state spaces and introducing an additional sink state. 
That is,  $\cS = \cup_{1 \leq t \leq T} \cS_t \cup \cS_{T+1}$, 
where $\cbr{\cS_t}_{t=1}^T$ is defined as before and $\cS_{T+1} = \cbr{s_{\mathrm{sink}}}$ denotes an additional sink state. 
The transition kernel $P_{(i)}$ in the discounted infinite-horizon setting is defined in the same way as in the original instances, with $P_{(i)}(S_{T+1} = s_{\mathrm{sink}} | S_{T} ) =1$ for $i \in \cbr{1,2}$. 
It remains interesting to study whether similar results can be established for the infinite-horizon average-cost setting.

\section*{Acknowledgements}
We would like to thank Ehsan Sharifian for helpful discussions that inspire the development of this manuscript.
We are also grateful to Professor Alexander Shapiro for his valuable suggestions on the initial draft, and to Nian Si for bringing to our attention the related developments in \cite{le2007robust}.

\bibliographystyle{plain}
\bibliography{references}

@article{xu2010distributionally,
  title={Distributionally robust Markov decision processes},
  author={Xu, Huan and Mannor, Shie},
  journal={Advances in Neural Information Processing Systems},
  volume={23},
  year={2010}
}

@article{li2025rectangularity,
  title={Rectangularity and Duality of Distributionally Robust Markov Decision Processes: Y. Li, A. Shapiro},
  author={Li, Yan and Shapiro, Alexander},
  journal={Mathematical Programming},
  pages={1--42},
  year={2025},
  publisher={Springer}
}

@article{steimle2021multi,
  title={Multi-model Markov decision processes},
  author={Steimle, Lauren N and Kaufman, David L and Denton, Brian T},
  journal={IISE Transactions},
  volume={53},
  number={10},
  pages={1124--1139},
  year={2021},
  publisher={Taylor \& Francis}
}

@article{gabow1976two,
  title={On two problems in the generation of program test paths},
  author={Gabow, Harold N. and Maheshwari, Shachindra N and Osterweil, Leon J.},
  journal={IEEE Transactions on Software Engineering},
  number={3},
  pages={227--231},
  year={1976},
  publisher={IEEE}
}

@incollection{Tallec,
  author={Y. Le Tallec},
  title={Robust, {R}isk-{S}ensitive,
 and {D}ata-{D}riven {C}ontrol of {M}arkov {D}ecision
{P}rocesses},
  publisher={Ph.D. thesis, Massachusetts Institute of Technology. Cambridge, MA},
  year={2007},
}

@article{daskalakis2020independent,
  title={Independent policy gradient methods for competitive reinforcement learning},
  author={Daskalakis, Constantinos and Foster, Dylan J and Golowich, Noah},
  journal={Advances in neural information processing systems},
  volume={33},
  pages={5527--5540},
  year={2020}
}

@article{davis2019stochastic,
  title={Stochastic model-based minimization of weakly convex functions},
  author={Davis, Damek and Drusvyatskiy, Dmitriy},
  journal={SIAM Journal on Optimization},
  volume={29},
  number={1},
  pages={207--239},
  year={2019},
  publisher={SIAM}
}

@inproceedings{wang2023policy,
  title={Policy gradient in robust mdps with global convergence guarantee},
  author={Wang, Qiuhao and Ho, Chin Pang and Petrik, Marek},
  booktitle={International conference on machine learning},
  pages={35763--35797},
  year={2023},
  organization={PMLR}
}

@article{agarwal2021theory,
  title={On the theory of policy gradient methods: Optimality, approximation, and distribution shift},
  author={Agarwal, Alekh and Kakade, Sham M and Lee, Jason D and Mahajan, Gaurav},
  journal={Journal of Machine Learning Research},
  volume={22},
  number={98},
  pages={1--76},
  year={2021}
}

@article{sutton1999policy,
  title={Policy gradient methods for reinforcement learning with function approximation},
  author={Sutton, Richard S and McAllester, David and Singh, Satinder and Mansour, Yishay},
  journal={Advances in neural information processing systems},
  volume={12},
  year={1999}
}

@article{Nilim2005,
  title={Robust control of {M}arkov decision processes
   with uncertain transition probabilities},
  author={A. Nilim and L. El Ghaoui},
  journal={Operations Research},
  volume={53},
  pages={780--798},
  year={2005}
}

@Article {iyen,
  Title                    = {Robust {D}ynamic {P}rogramming},
  Author                   = {G.N. Iyengar},
  Journal                  = {Mathematics of Operations Research},
  Year                     = {2005},
  Pages                    = {257--280},
  Volume                   = {30}
}

@article{tulcea1949mesures,
  title={Mesures dans les espaces produits},
  author={Tulcea, CT Ionescu},
  journal={Atti Accad. Naz. Lincei Rend},
  volume={7},
  pages={208--211},
  year={1949}
}

@phdthesis{le2007robust,
  title={Robust, risk-sensitive, and data-driven control of Markov decision processes},
  author={Le Tallec, Yann},
  year={2007},
  school={Massachusetts Institute of Technology}
}

@article{wiesemann2013robust,
  title={Robust Markov decision processes},
  author={Wiesemann, Wolfram and Kuhn, Daniel and Rustem, Ber{\c{c}}},
  journal={Mathematics of Operations Research},
  volume={38},
  number={1},
  pages={153--183},
  year={2013},
  publisher={INFORMS}
}

\end{document}